\newtheorem{definition}{Definition}
\newtheorem{thm}{Theorem}
\newtheorem{lem}{Lemma}
\title{A general explicit form for higher order approximations for fractional derivatives and its consequences}
\author{W. A. Gunarathna\thanks{gunarathnawa@yahoo.com}}
\affil{Department of Mathematics,
General Sir John Kotelawala Defence University, Ratmalana, Sri Lanka}
\author{H. M. Nasir\thanks{nasirh@squ.edu.om}}
\affil{Department of Mathematics,
Sultan Qaboos University, Sultanate of Oman}
\author{W. B. Daundasekera\thanks{wbd@pdn.ac.lk}}
\affil{Department of Mathematics,
University of Peradeniya, Sri Lanka}
\begin{document}

\maketitle

\begin{abstract}
A general explicit form for generating functions for approximating fractional derivatives is derived. To achieve this, an equivalent characterisation for consistency and order of approximations established on a general generating function is used to form a linear system of equations with Vandermonde matrix for the coefficients of the generating function which is in the form of power of a polynomial. This linear system is solved
for the coefficients of the polynomial in the generating function.
These generating functions
completely characterise Grünwald type approximations with shifts and order
of accuracy. Incidentally, the constructed generating functions happen to be generalization of the previously known Lubich forms of generating functions without shift. As a consquence,
a general explicit form for new finite difference formulas for integer-order derivatives with any order of accuracy are derived.
\end{abstract}

{\bf Keywords:}
Fractional derivative, Gr{ü}nwald approximation, Generating function, Vandermonde system, finite difference formula


\section{Introduction}

Fractional calculus has a history that goes back to L'Hospital,
Leibniz and Euler \cite{leibnitz1962letter, eulero1738progressionibus}.
A historical account of early works on fractional calculus can be found,
for eg., in \cite{ross1977development}.
Fractional integral and fractional derivative are extensions of the integer-order integrals and derivatives to a real or complex order.
Various definitions of fractional derivatives have been proposed in the past, among which
the Riemann-Liouville, Gr\"unwald-Letnikov and Caputo derivatives are common and established.
Each definition characterizes certain properties of the integer-order derivatives.

Recently, fractional calculus found its way into the application domain in
science and engineering \cite{mainardi1996fractional,bagley1983theoretical,vinagre2000some,
metzler2004restaurant}. Fractional derivative is also found to be more suitable to describe
anomalous transport in an external field
derived from the continuous time random walk \cite{barkai2000continuous},
resulting in a fractional diffusion equation (FDE).
The FDE involves fractional derivatives either in time, in space or in both  variables.

Fractional derivative is approximated by
Gr\"unwald approximation obtained from the equivalent Gr\"unwald-Letnikov formula of the Riemann-Liouville definition.
Numerical experience and theoretical justifications have shown that
application of this approximation {\it as it is}
in the space fractional diffusion equation results in unstable solutions when explicit Euler,
implicit Euler and even the Crank-Nicolson (CN) type schemes are used \cite{meerschaert2004finite}.
The latter two schemes are popular for their unconditional stability for
classical diffusion equations with integer-order derivatives.
This peculiar phenomenon for the implicit and CN type schemes is corrected and the
stability is restored when a shifted
form of the Gr\"unwald approximation
is used \cite{meerschaert2004finite, meerschaert2006finite}.

The Gr\"unwald approximation is completely identified by a generating function $W_1(z) = (1-z)^\alpha $ whose Taylor expansion provides coefficients for the series in the Gr\"unwald approximation formula.
This approximation is known to be of first order with the space
discretization size $h$ in the shifted and non-shifted form and
is, therefore, useful only in first order schemes for the FDE such as Euler (forward)
and implicit Euler(backward).
Since the CN approximation scheme is of second order in time step $\tau$,
Meerchaert et al. \cite{tadjeran2006second} used extrapolation improvement
for the space discretization to obtain a second order accuracy.
Subsequently, second order approximations for the space fractional
derivatives were obtained through some manipulations on the Gr\"unwald approximation.
Nasir et al. \cite{nasir2013second} obtained a second order accuracy through
a non-integer shift in the Gr\"unwald approximation, displaying super convergence.
A convex combination
of various shifts of the shifted Gr\"unwald approximations were used to obtain
higher order approximations in Chinese schools \cite{tian2015class,hao2015fourth, zhou2013quasi,li2017new,yu2017third},
some of them are unconditionally stable for the space FDE with CN type schemes. Zhao and Deng \cite{zhao2015series} extended the concept of super convergence to derive a series of  higher order approximations.
All of the above schemes are based on the first order approximating generating function $W_1(z) = (1-z)^\alpha $.

Earlier, Lubich \cite{lubich1986discretized} obtained some higher order approximations for the
fractional derivative without shift for orders up to 6.
Numerical experiments show that these approximations are also unstable for the
FDE when the methods mentioned above are used.
Shifted forms of these higher order approximations diminish the order to one,
making them unusable as Chen and Deng \cite{chen2014fourth, chen2014fourthSIAM} noted.
Nasir and Nafa \cite{nasir2018new, NasirNafa2018third} have recently constructed
generating functions for some higher order Gr\"unwald type approximations with shift.
They generalized the concept of Gr\"unwald difference approximation
by identifying it with coefficient generating functions. The generating functions have to be constructed individually for each order required using Taylor series expansions which are time consuming even with symbolic computations.

In this paper, we construct a general explicit form for approximating generating functions for any arbitrary order with shift. Their non-shift versions incidentally reduce to the Lubich generating functions. Our formulation also gives a unified general
formula for higher order approximations for any integer order derivatives.

The rest of the paper is organized as follows. In Section \ref{DefinitionSec}, some preliminaries, definitions and
notations of earlier works are introduced with proofs of relevant
important results. In Section \ref{ConstructSec},
the general explicit form for generating functions is established.
Section \ref{finitedif} derives from the main result finite difference formulas
for integer-order derivatives for any desired order and shift.
Conclusion are drawn in Section \ref{ConclusionSec}.

\section{Prelimineries, definitions and notations}\label{DefinitionSec}

The space of Lebesgue integrable functions defined in  $\Omega \subseteq \mathbb{R}$ is denoted by
$L_1(\Omega) $.
Let $f(x)\in L_1(\mathbb{R})$
and assume that it  is sufficiently differentiable so that the following definitions hold.
The left and right Riemann-Liouville fractional derivatives of real
order $\alpha >0 $ are defined respectively as
\begin{equation}\label{LeftRL}
  \;^{RL} D_{x-}^\alpha f(x) = \frac{1}{\Gamma (n-\alpha) } \frac{d^n}{dx^n}
  \int_{-\infty}^{x} \frac{f(\eta)}{(x-\eta)^{\alpha + 1 - n}} d\eta ,
\end{equation}
and
\begin{equation}\label{RightRL}
  \;^{RL} D_{x+}^\alpha f(x) = \frac{(-1)^n }{\Gamma (n-\alpha) } \frac{d^n}{dx^n}
  \int_{x}^{\infty} \frac{f(\eta)}{(\eta-x)^{\alpha + 1 - n}} d\eta ,
\end{equation}
where $n = [ \alpha] + 1  $, an integer with
$n-1 < \alpha \le n $ and $\Gamma (\cdot) $ denotes the gamma function.

The corresponding left/right Gr\"unwald-Letnikov (GL) definition  is given respectively by

\begin{equation}\label{LeftGL}
  \;^{GL} D_{x\mp}^\alpha f(x) = \lim_{h \rightarrow 0} \frac{1}{h^\alpha } \sum_{k=0}^\infty (-1)^k \binom{\alpha}{k} f(x \mp k h),
\end{equation}
where $\binom{\alpha}{k} = \frac{\Gamma (\alpha +1)}{\Gamma (\alpha +1-k)k!} $.

It is known that the Riemann-Liouville and Gr\"unwald-Letnikov definitions are
equivalent \cite{podlubny1998fractional}. Hence, from now onwards, we denote the left and right fractional differential operators
as $ D_{x-}^\alpha $ and $ D_{x+}^\alpha $ respectively

The Fourier transform (FT) of an integrable function $f(x) \in L_1(\mathbb{R})$ is defined by
$
\hat{f}(\eta) := \mathfrak{F}(f(x))(\eta) = \int_{-\infty}^{\infty} f(x)e^{-i\eta x}dx.$
The inverse FT is given by\\
$
\mathfrak{F}^{-1}({\hat f}(\eta))(x) = \int_{-\infty}^{\infty} {\hat f} (\eta)e^{i\eta x}d\eta = f(x).
$
The FT is linear:
$
\mathfrak{F}(\alpha f(x)+\beta g(x))(\eta) = \alpha \hat{f}(\eta)+ \beta {\hat g}(\eta).
$
For a function $f$ at a point $x+\beta \in \Omega,  \, \beta \in \mathbb{R} $, the FT is given by
$
\mathfrak{F}(f(x+\beta))(\eta) = e^{i\beta x} \hat{f}(\eta).
$
%
If $ D_{x\mp}^\alpha  f(x) \in L_1(\Omega) $,   the FT of the left and right fractional derivatives are given by
$
\mathfrak{F} (D_{x\mp}^\alpha  f(x))(\eta) =(\pm i\eta)^\alpha \hat{f}(\eta)
$ \cite{podlubny1998fractional}.
A consequence of Riemann-Lebesgue lemma states that $f(x)$ and its derivatives upto $\gamma>0$ are in $L_1(\mathbb{R})$ if and only if
$ |\hat{f}(\eta)| \le C_0(1+|\eta|)^{-\gamma} $ for some constant $C_0>0$.

For a fixed $h$, the Gr\"unwald approximations for the fractional derivatives in (\ref{LeftRL}) and (\ref{RightRL})
are obtained by simply dropping the limit in the GL definitions in (\ref{LeftGL}):
\begin{equation*}
  \delta_{x \mp,h}^\alpha f(x) = \frac{1}{h^\alpha } \sum_{k=0}^\infty g_k^{(\alpha)} f(x \mp kh),
\end{equation*}
where
$g_k^{(\alpha)} = (-1)^k \binom{\alpha}{k}$ are the coefficients of the
Taylor series expansion of the generating function $W_1(z) = (1-z)^\alpha $.

When $f(x) $ is defined in the intervals $[a,b]$, it is zero-extended outside the interval to adopt these definitions of fractional derivatives and their approximations. The
sums are then restricted to  finite terms up to $N$
which grows to infinity as $h \rightarrow 0$.
Ideally, $N$ is chosen to be $ N = \left[ \frac{x-a}{h} \right] $ and
$ N = \left[ \frac{b-x}{h} \right] $ for the left and right fractional derivatives  respectively to cover the sum up to the boundary of the domain intervals, where $[y]$ denotes the integer part of $y$.

The Gr\"unwald approximations are  of first order accuracy:
\begin{equation*}\label{FirstOrder}
    D_{x\mp}^\alpha f(x) = \delta_{x\mp,h}^\alpha f(x) +O(h)
\end{equation*}
and display unstable solutions
in the approximation of FDE by implicit and CN type schemes \cite{meerschaert2004finite}. As a remedy,
a shifted form of Gr\"unwald formula with  shift $r$ is used:
\begin{equation*}\label{LshiftedGApp}
    \delta_{x\mp, h,r}^\alpha f(x) = \frac{1}{h^\alpha } \sum_{k=0}^{N+r} g_k^{(\alpha)} f(x \mp (k-r)h),
\end{equation*}
where the upper limit of the summation has been adjusted to cover the shift $r$.

Meerchaert et al. \cite{meerschaert2004finite} showed that
for a shift $r = 1$ , the $\delta_{x\mp, h,1}^\alpha f(x)$
are also of first order approximations with unconditional stability restored
in implicit and CN type schemes for space FDEs.

For higher order approximations, Nasir et al. \cite{nasir2013second} derived a second order approximation by a non-integer shift, displaying super convergence:
\begin{equation*}\label{Nasir2nd}
\delta_{x \mp, h,\alpha/2} f(x) = D_{x\mp }^\alpha f(x) + O(h^2).
\end{equation*}

Chen et al.\cite{tian2015class} used convex combinations
of different shifted Gr\"unwald forms to obtain order 2 approximations:
\begin{equation*}\label{Tian2nd}
  \lambda_1 \delta_{x\mp, h,p} +\lambda_2 \delta_{x_\mp,h,q}
  = D_{x\mp}^\alpha f(x) + O(h^2)
\end{equation*}
with $\lambda_1 = \frac{\alpha - 2q}{2(p-q)} $ and
$\lambda_2 = \frac{\alpha - 2p}{2(p-q)} $  for
$(p,q) = (1,0) , (1,-1)$.

Hao et al. \cite{hao2015fourth} obtained an quasi-compact order 4 approximation.

Higher order approximations were also obtained earlier by Lubich
\cite{lubich1986discretized} establishing a connection with the
characteristic polynomials of multistep methods for ordinary differential
equations. Specifically,
if $ \rho(z), \sigma(z) $ are, the
characteristic polynomials of a multistep method of order of convergence $p$ \cite{henrici1962discrete},
then $\left(\frac{\sigma(1/z)}{\rho(1/z)}\right)^\alpha $ gives the coefficients for the
Gr\"unwald type approximation of same order for the fractional derivative of order $\alpha$.
From the backward multistep methods, Lubich \cite{lubich1986discretized} derived
higher order approximations of up to order six in the form
$ W_p (z) = (\sum_{j=1}^p \frac{1}{j} (1-z)^j )^\alpha $.
The generating functions
$W_p(z)$ given in Table \ref{LubichHighOrder} are of order $p$ for $ 1\le p \le 6 $.

\begin{table}[h]
  \centering
  \begin{tabular}{ll}
    \hline
 $W_1(z) = \left(1-z\right)^\alpha   $ \\
 $W_2(z) = \left(\frac{3}{2} -2z+\frac{1}{2}z^2\right)^\alpha $ \\
 $W_3(z) = \left(
\frac{11}{6} -3z+\frac{3}{2}z^2-\frac{1}{3}z^3
\right)^\alpha  $ \\
 $W_4(z) = \left(
\frac{25}{12} -4z+3z^2-\frac{4}{3}z^3 + \frac{1}{4} z^4 \right)^\alpha $ \\
 $W_5(z) = \left(
\frac{137}{60} -5z+5z^2-\frac{10}{3}z^3 + \frac{5}{4} z^4 -\frac{1}{5}z^5
\right)^\alpha $ \\
 $W_6(z) = \left(
\frac{49}{20} -6z+\frac{15}{2}z^2-\frac{20}{3}z^3 + \frac{15}{4} z^4 -\frac{6}{5}z^5 +\frac{1}{6} z^6
\right)^\alpha $ \\
    \hline
  \end{tabular}
  \caption{Lubich approximation generating functions}\label{LubichHighOrder}
\end{table}

These higher order approximations also suffer the same instability issues
without shift when used
in numerical schemes mentioned above. Shifted forms of these approximations
reduce the order to one, making them uninteresting\cite{nasir2018new}.

\subsection{Higher order approximations with shift}\label{SubSec}

A generalization of the Gr\"unwald approximation was proposed by Nasir and Nafa
\cite{nasir2018new, NasirNafa2018third} to construct higher order approximating generating functions that can be used
with shift without reducing the order of accuracy.
The definitions and results in their work are given below.

Let $ \left\{ w_{k,r}^{(\alpha)} \right\} $ be any real sequence and
\begin{equation*}
W(z) = \sum_{k=0}^{\infty} w_{k,r}^{(\alpha)} z^k
\end{equation*}
be its generating function.

For a sufficiently smooth function $f(x)$,
denote the left or right Gr\"unwald type operator  with shift $r$ and weights $w_{k,r}^{(\alpha)}$ as
\begin{equation}\label{Gr\"unwaldOp}
\Delta_{x\mp, h,r}^\alpha f(x) = \frac{1}{h^\alpha } \sum_{k=0}^\infty
w_{k,r}^{(\alpha)} f(x \mp (k-r) h).
\end{equation}

\begin{definition}\label{def1}
A sequence $ \left\{ w_{k,r}^{(\alpha)} \right\} $ ( or its generating function $W(z)$ ) is said to approximate the left or right fractional derivative
$D_{x\mp} ^\alpha  f(x) $ at $x$ with shift $r$ in the sense of Gr\"unwald if
\begin{equation}\nonumber\label{consistency1}
D_{x\mp}^\alpha  f(x) =
\lim_{h \rightarrow 0} \Delta_{x\mp, h,r}^\alpha f(x).
\end{equation}

\end{definition}

\begin{definition}\label{def2}
A sequence $ \left\{ w_{k,r}^{(\alpha)} \right\} $ ( or its generating function $W(z)$ ) is said
to approximate the fractional derivative $D_{x\mp}^\alpha  f(x) $
with shift $r$  and order $p \ge 1$ if
\begin{equation}\label{GrCondition}
D_{x\mp }^\alpha  f(x) =
\Delta_{x\mp, h,r}^\alpha f(x) + O(h^p).
\end{equation}
\end{definition}

The following theorem establishes an equivalent characterization
of the generator $W(z)$ for an approximation of fractional differential operator
with order  $ p\ge 1$ and shift $r$. We denote for conciseness the integer-order differential operators as $D^k = \frac{d^k}{dx^k}, k =0,1,2,\cdots $.

\begin{thm}{\rm \cite{nasir2018new, NasirNafa2018third}}\label{Mainthm}

Let $\alpha \in \mathbb{R}^+ $ and $m,n \in \mathbb{Z}^+ $ satisfying $n-1 < \alpha \le n < m$. Let $f(x)\in C^{m + n + 1}(\mathbb{R})$ and $ D^k f(x) \in L_1(\mathbb{R})$ for $ 0 \le k \le m+n+1$.  Let $W(z)$ be the generating function of a real sequence $\{ w_{k,r}^{(\alpha)} \}$ and
\begin{equation}\label{Grz}
G_r(z) := \frac{1}{z^\alpha} W(e^{-z})e^{rz}
\end{equation}
be analytic in $|z|\le R$ for some $ R\ge 1 $.
Then,
the generating function $W(z)$ approximates the left and right fractional differential
operators with  shift $r$ and order $p$, $1\le p\le m $, if and only if
\begin{equation}\label{Worderp}
   G_r(z) = 1 + O(z^p).
\end{equation}
Moreover, if $ G_r(z) = 1 + \sum_{k=p}^{\infty} a_{k} z^k $ , we have
\begin{equation}\label{TaylorFractional}
\Delta_{x\mp,h,r}^\alpha f(x) =
D_{x\mp}^\alpha f(x) +
 \sum_{k=p}^{m-1} h^{k} a_{k}  D_{x\mp}^{k+\alpha} f(x) + O(h^m).
\end{equation}
\end{thm}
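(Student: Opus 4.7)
The natural plan is to work on the Fourier side, where the shift and the generating function combine into exactly the quantity $G_r(z)$.

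First I would compute the Fourier transform of $\Delta_{x\mp,h,r}^\alpha f(x)$. Using linearity and the shift property $\mathfrak{F}(f(x+\beta))(\eta) = e^{i\beta\eta}\hat f(\eta)$ term by term in (\ref{Gr\"unwaldOp}), I would obtain
\begin{equation*}
\mathfrak{F}(\Delta_{x\mp,h,r}^\alpha f)(\eta) \;=\; \frac{1}{h^\alpha}\, e^{\pm i r h \eta}\, W(e^{\mp i h \eta})\,\hat f(\eta).
\end{equation*}
Setting $z = \pm i h \eta$ and pulling out the factor $(\pm i\eta)^\alpha$, the right hand side becomes $(\pm i\eta)^\alpha G_r(\pm i h \eta)\,\hat f(\eta)$. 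Subtracting $\mathfrak{F}(D_{x\mp}^\alpha f)(\eta) = (\pm i\eta)^\alpha \hat f(\eta)$ then gives the clean error identity
\begin{equation*}
\mathfrak{F}\bigl(\Delta_{x\mp,h,r}^\alpha f - D_{x\mp}^\alpha f\bigr)(\eta) \;=\; (\pm i\eta)^\alpha\bigl(G_r(\pm i h \eta) - 1\bigr)\,\hat f(\eta).
\end{equation*}
This identity is the workhorse, and because $G_r$ is analytic in $|z|\le R$ with $R\ge 1$ (so in particular at $z=0$), the comparison between order of approximation and order of vanishing of $G_r(z)-1$ becomes a comparison of Taylor series.

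For the sufficiency direction I would substitute the expansion $G_r(z) = 1 + \sum_{k\ge p} a_k z^k$ into the error identity, split the sum at $k = m-1$, and take the inverse Fourier transform term by term. Each finite-sum term $h^k a_k (\pm i\eta)^{k+\alpha}\hat f(\eta)$ inverts to $h^k a_k D_{x\mp}^{k+\alpha}f(x)$, producing the explicit polynomial part of (\ref{TaylorFractional}). The delicate piece is bounding the tail $\sum_{k\ge m}$: I would use analyticity of $G_r$ on $|z|\le R$ (hence $|a_k|\le C R^{-k}$), combined with the Riemann--Lebesgue bound $|\hat f(\eta)|\le C_0(1+|\eta|)^{-(m+n+1)}$ coming from the hypothesis $D^k f\in L_1(\mathbb R)$ for $k\le m+n+1$, to obtain an $L_1$ bound of size $O(h^m)$ on the tail of the transform. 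Inverting then yields the $O(h^m)$ remainder, and in particular $\Delta_{x\mp,h,r}^\alpha f = D_{x\mp}^\alpha f + O(h^p)$, establishing order $p$.

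For the necessity direction I would argue by contradiction: suppose $G_r(z)-1 = a_q z^q + O(z^{q+1})$ with $q<p$ and $a_q\neq 0$. The same error identity shows $\mathfrak{F}(\Delta_{x\mp,h,r}^\alpha f - D_{x\mp}^\alpha f)(\eta) = h^q a_q(\pm i\eta)^{q+\alpha}\hat f(\eta) + O(h^{q+1})$ on the Fourier side, so choosing any test function $f$ for which $D_{x\mp}^{q+\alpha}f$ is nonzero at the point of evaluation forces the approximation error to be exactly of order $h^q$, not $h^p$, giving a contradiction. The right-hand case is handled identically after replacing $z$ by $-z$.

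The main obstacle is the tail estimate in the sufficiency argument: one has to convert the analyticity of $G_r$ and the smoothness of $f$ into a uniform $L_1$ bound on the remainder after inverse Fourier transform, uniformly in $h$ small, so that the exchange of summation, integration, and limits is justified. The shift and sign manipulations are just bookkeeping; the convergence accounting is where the hypotheses $n-1<\alpha\le n<m$, $f\in C^{m+n+1}$, $D^k f\in L_1$, and analyticity of $G_r$ in $|z|\le R$ with $R\ge 1$ all have to work together.
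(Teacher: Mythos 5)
Your overall route is the paper's route: pass to the Fourier side, obtain the identity $\mathfrak{F}(\Delta_{x\mp,h,r}^\alpha f)(\eta) = G_r(\pm i\eta h)\,(\pm i\eta)^\alpha \hat f(\eta)$, split off the Taylor polynomial of $G_r$ of degree $m-1$, invert the finite sum term by term to get the expansion (\ref{TaylorFractional}), and read the order-$p$ equivalence off that expansion (your contradiction argument for necessity is the same comparison of coefficients made explicit with a test function, which the paper leaves implicit).

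The one step you yourself flag as the main obstacle is, however, exactly where your sketch has a genuine gap. You propose to control the tail $\sum_{k\ge m} a_k(\pm i\eta h)^k$ via the Cauchy-type estimate $|a_k|\le C R^{-k}$ together with the decay of $\hat f$. That works only in the regime $|\eta h|\le R$, where it gives $\bigl|G_r(i\eta h)-\sum_{l=0}^{m-1}a_l(i\eta h)^l\bigr|\le C_3|\eta h|^m$, i.e.\ the paper's bound (\ref{Bound2}). For $|\eta h|>R$ the Taylor series of $G_r$ need not converge at all: with $|a_k|\approx R^{-k}$ the terms $|a_k|\,|\eta h|^k$ grow geometrically, so the series representation you are summing is simply not valid there. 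And this regime cannot be avoided: for every fixed $h>0$ the frequency $\eta$ runs over all of $\mathbb{R}$, so the set $|\eta h|>R$ is always present and carries the high-frequency part of the inversion integral; taking $h$ small does not shrink it away, and the decay of $\hat f$ cannot rescue a divergent series. The paper closes precisely this hole with an ingredient your proposal lacks: from analyticity of $G_r$ at $z=0$ it first extracts the consistency condition $W(1)=0$, hence the factorization $W(z)=(1-z)^\alpha Q(z)$ with $Q$ bounded on the unit circle; then, for $|\eta h|>R$, it bounds $G_r$ \emph{directly} rather than through its series, $|G_r(i\eta h)| = \bigl|\tfrac{1-e^{-i\eta h}}{i\eta h}\bigr|^\alpha\,|Q(e^{-i\eta h})| \le C\,2^\alpha R^{-\alpha} \le C_1|\eta h|^m$, and bounds the discarded partial sum crudely by $C_2|\eta h|^m$ (estimates (\ref{Estimate1})--(\ref{Estimate2})). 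Only after gluing the two regimes into the global bound (\ref{Bound3}) does the hypothesis $|\hat f(\eta)|\le C_0(1+|\eta|)^{-(m+n+1)}$ deliver the integrable majorant $|\hat\phi(\eta,h)|\le C h^m(1+|\eta|)^{-(n+1-\alpha)}$ of (\ref{Phihatrelation}), which is what licenses the inverse transform and the $O(h^m)$ remainder in (\ref{MainApprox}). So your plan needs to be repaired by adding the split at $|\eta h|=R$ and the boundedness of $W$ on the unit circle (via $W(1)=0$ and the analytic factor $Q$); as written, the tail estimate fails on the unbounded frequency range.
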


\begin{proof}
We restrict the proof for the left fractional derivative and the proof for right derivative follows analogously. Analyticity of $G_r(z)$ allows to write $G_r(z) = \sum_{l=0}^\infty a_lz^l $ as a series.

Applying the FT for $\Delta_{x-,h,r}^\alpha f(x) $ in (\ref{Gr\"unwaldOp}),
with the use of linearity of FT, we have, with $z = i\eta h$,
\begin{align}
\mathfrak{F} (\Delta_{x-,h,r}^\alpha  f(x))(\eta)  & = \nonumber \frac{1}{h^\alpha } \sum_{k=0}^\infty
w_{k,r}^{(\alpha)} \mathfrak{F}(f(x-(k-r) h))(\eta)\\\nonumber
   & = \frac{1}{h^\alpha } \sum_{k=0}^\infty w_{k,r}^{(\alpha)}e^{-(k-r)i\eta h} \hat{f}(\eta)\\\nonumber
   & = \frac{e^{rih\eta}}{(i\eta h)^\alpha }  \sum_{k=0}^\infty w_{k,r}^{(\alpha)} e^{-ki\eta h}(i\eta)^\alpha \hat{f}(\eta)\\\nonumber
   & = \frac{e^{rz}}{z^\alpha}  \left[  \sum_{k=0}^\infty w_{k,r}^{(\alpha)} (e^{-z})^k  \right]
   (i\eta)^\alpha \hat{f}(\eta)
   = \frac{e^{rz}}{z^\alpha} W(e^{-z}) (i\eta)^\alpha \hat{f}(\eta) \\
   &= G_r(z) (i\eta)^\alpha \hat{f}(\eta)
    = \sum_{l=0}^{m-1} a_{l} (i \eta h)^{l+\alpha} {\hat f}(\eta) +
    \hat{\phi}(\eta, h), \label{Remainder}
   \end{align}
where, from (\ref{Remainder}),
\begin{equation}\label{PhiDef}
\hat{\phi}(\eta, h) = \left(G_r(i\eta h) -
\sum_{l=0}^{m-1} a_{l} (i \eta h)^{l} {\hat f}(\eta) \right) (i\eta)^\alpha \hat{f}(\eta).
\end{equation}
We establish, with the assumptions on $W(z)$ and $f(x)$, that
\begin{equation}\label{Phihatrelation}
|\hat{\phi}(\eta,h)| \le C h^m (1+|\eta|)^{-(n+1-\alpha)}
\end{equation}
so that, being $n+1-\alpha >0 $, there is a function $\phi(x,h) \in L_1(\mathbb{R}) $
such that $\mathfrak{F}(\phi(x,h)) = \hat{\phi}(\eta,h) $.
Taking the inverse FT, we have
\begin{equation}\label{MainApprox}
\Delta_{h,+r}^\alpha  f(x)
= \sum_{l=0}^{m-1} a_{l} D_{x-}^{l+\alpha}  f(x) h^l + O(h^m).
\end{equation}

Now, (\ref{Worderp}) holds if and only if $ a_{0} = 1, a_l = 0$, for $ l = 1,2,\cdots, p-1$
and the required
(\ref{TaylorFractional}) holds.

To prove (\ref{Phihatrelation}), we establish the following:
(a) We have from (\ref{Worderp}), $z^\alpha G_r(z) = W(e^{-z} ) e^{rz}, z \ne 0. $  Taking limit as $z\rightarrow 0$, we have $W(1) = 0$. This is the consistency condition for the approximation of the fractional operator.

(b) From the consistency condition, we have $W(z) = (1-z)^\alpha Q(z) $ for some analytic function $Q(z)$. Hence, we have
$|Q(e^{-i\eta h}) | \le C$. Moreover, the series of $G_r(z) $ is absolutely convergent with radius of convergence $R$.
Now, for $ |\eta h| > R $, we have
\begin{equation}\label{Estimate1}
|G_r(i\eta h)| =\left|\frac{1-e^{-i\eta h}}{i\eta h } \right|^\alpha |Q(e^{-i\eta h}) |\le C \frac{2^\alpha }{R^\alpha}
\le C \frac{2^\alpha |\eta h|^m }{R^{\alpha+m}}=C_1|\eta h|^m.
\end{equation}

(c) Also, for $ |\eta h| > R $, we have
\begin{equation}\label{Estimate2}
\left|\sum_{l=0}^{m-1} a_{l} (i \eta h)^{l} {\hat f}(\eta)\right|
\le
\sum_{l=0}^{m-1} |a_{l}| |\eta h|^{l}| {\hat f}(\eta)|
\le
|\eta h|^m \sum_{l=0}^{m-1} \frac{|a_{l}||{\hat f}(\eta)|}{|\eta h|^{m-l}}
\le  C_2 |\eta h|^m.
\end{equation}

From the inequalities in  (\ref{Estimate1}) and (\ref{Estimate2}), we have,
for $ |\eta h| > R $,
\begin{equation}\label{Bound1}
\left|
  G_r(i\eta h) - \sum_{l=0}^{m-1} a_{l} (i \eta h)^{l}
\right|
\le (C_1 +C_ 2)|\eta h|^m.
\end{equation}

(d) Moreover, for  $ |\eta h| \le R $, absolute convergence of $G_r(z)$ gives,
\begin{equation}\label{Bound2}
\left|
  G_r(i\eta h) - \sum_{l=0}^{m-1} a_{l} (i \eta h)^{l}
\right|
\le
|\eta h|^m \sum_{l=m}^{\infty} |a_{l}|
\le C_3 |\eta h|^m.
\end{equation}

Inequalities (\ref{Bound1}) and (\ref{Bound2}) give, with
$C_4 = \max ( C_1 +C_2 , C_3)$, for $ \eta h \in \mathbb{R}$,
\begin{equation}\label{Bound3}
\left|
  G_r(i\eta h) - \sum_{l=0}^{m-1} a_{l} (i \eta h)^{l}
\right|
\le C_4 |\eta h|^m.
\end{equation}

Now, from (\ref{PhiDef}), we establish (\ref{Phihatrelation}) as follows:
\begin{align*}
|\hat{\phi}(\eta,h)| & \le
C_4  |\eta|^{\alpha+m} |\hat{f}(\eta)|\le
C_4 (1+|\eta|)^{\alpha+m}C_0 (1+|\eta|)^{-(m+n+1)} \\
& \le C (1+|\eta|)^{-(n+1-\alpha)}.
\end{align*}
\end{proof}

We also need the following results on the determinant of Vandermonde matrix and its variant.

\begin{lem}
It is known that, for a finite sequence of parameters  $x_0, x_1, \cdots, x_q$, the determinant of the   Vandermonde matrix $V(x_0, x_1, \cdots, x_q)$
  of size $(q+1)$  is given by
  \[
  |V(x_0, x_1, \cdots, x_q)| = \left|
  \begin{array}{ccccc}
    1   & 1   & 1   & \cdots  & 1 \\
    x_0 & x_1 & x_2 & \cdots  & x_q \\
    x_0^2 & x_1^2 & x_2^2  & \cdots & x_q^2 \\
    \vdots & \vdots & \vdots  & \vdots & \vdots \\
    x_0^q & x_1^q & x_2^q &  \cdots & x_q^q
  \end{array} \right|
  = \prod_{0\le i < j \le p} (x_j - x_i).
  \]
\end{lem}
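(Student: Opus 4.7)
The plan is to prove the identity by induction on the size parameter $q$. The base case $q=0$ reduces to the $1\times 1$ determinant $|\,1\,|=1$, which matches the empty product on the right-hand side. For the inductive step, I would prefer the polynomial-root approach over the row-reduction approach, since it avoids bookkeeping of signs and extracted factors.

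First I would fix $x_0, x_1, \ldots, x_{q-1}$ and regard
\[
D(t) := |V(x_0, x_1, \ldots, x_{q-1}, t)|
\]
as a polynomial in the single variable $t$. Expanding the determinant along the last column, whose entries are $1, t, t^2, \ldots, t^q$, shows that $D(t)$ has degree at most $q$ in $t$, and the coefficient of $t^q$ equals the cofactor attached to $t^q$, namely the minor obtained by deleting the last row and last column. That minor is precisely $|V(x_0, \ldots, x_{q-1})|$, and the sign $(-1)^{q+q}=+1$ is favourable.

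Next I would note that substituting $t = x_i$ for any $0 \le i \le q-1$ produces a matrix with two identical columns, forcing $D(x_i) = 0$. Thus $D(t)$ is a polynomial of degree at most $q$ vanishing at the $q$ distinct points $x_0, \ldots, x_{q-1}$, so
\[
D(t) = |V(x_0, \ldots, x_{q-1})| \prod_{i=0}^{q-1}(t - x_i).
\]
Evaluating at $t = x_q$ and substituting the inductive hypothesis $|V(x_0, \ldots, x_{q-1})| = \prod_{0 \le i < j \le q-1}(x_j - x_i)$ merges the two products into $\prod_{0 \le i < j \le q}(x_j - x_i)$, closing the induction.

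I do not expect any real obstacle: the only point needing mild care is confirming that the cofactor of $t^q$ in the last-column expansion is indeed the full Vandermonde determinant of $x_0, \ldots, x_{q-1}$ with the correct sign, which is immediate from the structure of the matrix. An alternative route via row operations $R_j \mapsto R_j - x_0 R_{j-1}$ applied top-down to zero out the first column below the pivot, then factoring $(x_i - x_0)$ out of each remaining column, would yield the same recurrence and work equally well as a fallback.
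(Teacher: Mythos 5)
Your proof is correct, and it is worth noting at the outset that the paper itself does not prove this lemma at all: it is stated as a known classical fact (the proof environment that follows in the source belongs to the companion lemma on the variant matrix $U_2$). Your induction via the one-variable polynomial $D(t)=|V(x_0,\dots,x_{q-1},t)|$ --- degree at most $q$, leading coefficient $|V(x_0,\dots,x_{q-1})|$ with the favourable sign $(-1)^{(q+1)+(q+1)}=+1$ from last-column expansion, and roots at $t=x_i$ --- is a standard, fully rigorous route, and evaluating at $t=x_q$ closes the induction exactly as you say. One small point deserves a sentence: the root-counting step tacitly assumes $x_0,\dots,x_{q-1}$ are pairwise distinct; when two coincide, both sides vanish trivially (equal columns on the left, a zero factor on the right), or one can observe that an identity between polynomials in the $x_i$ that holds on the dense set of distinct tuples holds identically. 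For comparison, the technique the authors actually use for the adjacent $U_2$ lemma --- homogeneity plus divisibility --- proves the present lemma even more quickly: $|V|$ is homogeneous of total degree $q(q+1)/2$ and vanishes whenever $x_i=x_j$, so it is divisible by all $\binom{q+1}{2}$ factors $(x_j-x_i)$; the quotient is then a homogeneous polynomial of degree $0$, i.e.\ a constant, fixed to $1$ by comparing the coefficient of the diagonal monomial $x_1x_2^2\cdots x_q^q$. Your inductive argument buys a self-contained elementary proof that avoids the divisibility lemma (that the pairwise coprime factors $(x_j-x_i)$ jointly divide the determinant in the polynomial ring), whereas the homogeneity route matches the paper's toolkit and is the one that extends directly to $U_2$, where the leftover factor has positive degree and must be identified as the sum of products $\sum_m \prod_{l\ne m} x_l$. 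Incidentally, your proof implicitly corrects a typo in the paper's statement: the product should run over $0\le i<j\le q$, not $0\le i<j\le p$.
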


\begin{lem}
Let a variant of the Vandermonde matrix be
\[
U_2(x_0, x_1, \cdots, x_q) =
\left[
  \begin{array}{cccc}
    1   & 1     & \cdots &  1 \\
    x_0^2 & x_1^2 &   \cdots & x_q^2 \\
    x_0^3 & x_1^3 &   \cdots & x_q^3 \\
    \vdots & \vdots  & \vdots & \vdots \\
    x_0^{q+1} & x_1^{q+1} &   \cdots & x_q^{q+1}
  \end{array} \right].
\]

The determinant of $U_2$, which is of size $(q+1)$, is given by
  \begin{equation}\label{VVendermode}
  |U_2|
  = \prod_{0\le i < j \le q} (x_j - x_i)
  \left(  \sum_{m=0}^{q} \prod_{ \begin{smallmatrix}
                                               l=0 \\
                                               l \ne m
                                             \end{smallmatrix}}^q x_l \right).
  \end{equation}
\end{lem}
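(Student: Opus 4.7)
The plan is to reduce (\ref{VVendermode}) to the previous Vandermonde identity by introducing an auxiliary indeterminate $t$ that plays the role of a $(q+2)$nd node. Form the $(q+2)\times(q+2)$ Vandermonde matrix $\tilde V(t) := V(x_0,\ldots,x_q,t)$, whose $(i,j)$-entry is $u_{j-1}^{i-1}$ with $u_k=x_k$ for $k\le q$ and $u_{q+1}=t$. By the preceding lemma,
\[
\det \tilde V(t) \;=\; \prod_{0\le i<j\le q}(x_j-x_i)\;\prod_{i=0}^{q}(t-x_i),
\]
a polynomial in $t$ of degree $q+1$.

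Next, I would compute the coefficient of $t^1$ in $\det\tilde V(t)$ in two different ways and equate. Expanding the product on the right,
\[
\prod_{i=0}^{q}(t-x_i) \;=\; \sum_{k=0}^{q+1}(-1)^{q+1-k}\,e_{q+1-k}(x_0,\ldots,x_q)\,t^k,
\]
shows that this coefficient equals $(-1)^q\,e_q(x_0,\ldots,x_q)\prod_{0\le i<j\le q}(x_j-x_i)$, where the elementary symmetric polynomial $e_q(x_0,\ldots,x_q)=\sum_{m=0}^{q}\prod_{l\neq m}x_l$ is precisely the symmetric factor appearing in (\ref{VVendermode}).

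On the other hand, perform a cofactor expansion of $\det \tilde V(t)$ along the last column, whose entries are $t^{i-1}$ for $i=1,\ldots,q+2$. The only contribution linear in $t$ comes from $i=2$, i.e.\ the row of first powers. Striking out that row and the last column of $\tilde V(t)$ leaves exactly $U_2(x_0,\ldots,x_q)$, and the cofactor sign is $(-1)^{2+(q+2)}=(-1)^q$. Hence the coefficient of $t^1$ in $\det \tilde V(t)$ also equals $(-1)^q\,|U_2|$. Equating the two expressions, the $(-1)^q$ factors cancel and (\ref{VVendermode}) follows.

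The argument is fundamentally routine once one has the idea of embedding $U_2$ into a genuine Vandermonde matrix by reinstating the missing $x^1$-row (as the row $(x_0,\ldots,x_q,t)$) and enlarging with a $t$-column. The step that most deserves care is the parallel sign bookkeeping, since $(-1)^q$ appears both in the cofactor expansion and in the coefficient of $t^1$ in $\prod(t-x_i)$; a miscount on either side would leave a spurious $(-1)^q$ in the final identity.
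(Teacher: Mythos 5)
Your proof is correct, and it takes a genuinely different route from the paper. The paper argues via the factor theorem and degree counting: $|U_2|$ is homogeneous of total degree $2+3+\cdots+(q+1)=q(q+3)/2$ and vanishes whenever $x_i=x_j$, so it equals the Vandermonde product times a homogeneous polynomial $P$ of degree $q$, which the paper then asserts has the form $A\sum_{m}\prod_{l\ne m}x_l$ and fixes $A=1$ by matching a single term. You instead embed $U_2$ in the genuine $(q+2)\times(q+2)$ Vandermonde matrix $V(x_0,\ldots,x_q,t)$, whose determinant is known from the preceding lemma, and extract the coefficient of $t^1$ in two ways --- once from $\prod_{i=0}^{q}(t-x_i)$, giving $(-1)^q e_q(x_0,\ldots,x_q)\prod_{i<j}(x_j-x_i)$, and once from the cofactor expansion along the $t$-column, giving $(-1)^q|U_2|$ since the minors are $t$-free and deleting the first-power row recovers exactly $U_2$; your sign bookkeeping ($(-1)^{2+(q+2)}=(-1)^q$ on both sides) checks out. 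Your approach actually buys something the paper's does not: the paper's step identifying $P$ as a multiple of the elementary symmetric polynomial $e_q$ is stated without justification, and making it rigorous requires observing both that $P$ is symmetric (quotient of two alternating polynomials) and that $|U_2|$ has degree at most $q+1$ in each variable, forcing $P$ to be multilinear --- only then is $P=Ae_q$ the unique possibility. Your coefficient-comparison argument sidesteps that gap entirely, since the symmetric factor emerges automatically as $e_q$ from the expansion of $\prod_{i=0}^{q}(t-x_i)$; the price is the slightly less self-contained setup of an auxiliary node, while the paper's method, once patched, generalizes more readily to variants of $U_2$ with other rows deleted.
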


\begin{proof}
  It is clear that $|U_2|$ is a multi variable homogeneous function of total degree $ 2+3+\cdots+(q+1) = q(q+3)/2$. Moreover,
  for $i\ne j$, by replacing $x_i$ with $x_j$, we have $|U_2|=0$ and thus $(x_i - x_j) $ are factors of $|U_2|$, and there are $q(q+1)/2$ such factors as was in the Vandermonde determinant $|V|$. Hence,
  \[
  |U_2|= \prod_{0\le i < j \le q} (x_j - x_i) P(x_0, x_1, \cdots, x_q),
  \] where $P$ is a homogeneous polynomial of total degree
  $q(q+3)/2 -   q(q+1)/2 = q.$

  Since there are $q+1$ variables, $x_j, 0\le j\le q$, $P$ has the form, with constant $A$,
  \[
  P = A[(x_1x_2\cdots x_q) + (x_0x_2\cdots x_q) + \cdots + (x_1x_2\cdots x_{q-1})]
   = A   \left(  \sum_{m=0}^{q} \prod_{ \begin{smallmatrix}
                                               l=0 \\
                                               l \ne m
                                             \end{smallmatrix}}^q x_l \right).
   \]
   Equating one of the terms of $|U_2|$, we see that $A=1$.
\end{proof}

\section{Construction of higher order approximations}\label{ConstructSec}

We consider generating functions in the form
 $W(z) = (P(z))^\alpha$, where $P(z)$ is a polynomial.

In \cite{nasir2018new}, a second order approximating  generating functions  with shifts were constructed in the form
$W_{2,r}(z) = (\beta_0 +\beta_1 z +\beta_2 z^2)^\alpha$
with the use of
Theorem \ref{Mainthm}.
The construction process involved Taylor series expansion
of $G_r(z) $ and applying (\ref{GrCondition}) to form a set of equations and solving for the coefficients $\beta_j$.
As this involves many parameters such as $\alpha$, order $p$, shift
$r$, for higher order
approximations, this expansion process leads a tedious algebraic manipulations requiring symbolic computation with a considerable amount of computing time.

In this paper, we construct explicit expressions for the coefficients
$\beta_j$ for an approximating generating function of the form
\[
W_{p,r}(z) =  (\beta_0 +\beta_1 z +\cdots+ \beta_p z^p)^\alpha
\]
with shift $r$ and order $p$.

\begin{thm}
With the assumptions of Theorem \ref{Mainthm},
the generating function of the form $W_p(z) = (\sum_{j=0}^p \beta_j z^j)^\alpha $ approximates the left and right fractional derivatives
$D_{x\mp}f(x)$ at $x$ with shift $r$ and order $p \ge 1$ if and only if
the coefficients $\beta_j, 0\le j\le p$, govern the linear system
\begin{equation}\label{VanderMonde}
  \sum_{j=0}^p (\lambda - j)^n \beta_j = \delta_{1,n}, \qquad n = 0,1,\cdots, p,
\end{equation}
where $\lambda = r/\alpha$ and $\delta_{1,n}$ is the Kronecker delta having value of one for  $n=1$ and zero otherwise.
\end{thm}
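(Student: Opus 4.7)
The plan is to apply Theorem \ref{Mainthm}, which reduces order-$p$ accuracy to the single analytic condition $G_r(z) = 1 + O(z^p)$, where $G_r(z) = z^{-\alpha} W(e^{-z}) e^{rz}$. With the ansatz $W(z) = (P(z))^\alpha$ for $P(z) = \sum_{j=0}^{p} \beta_j z^j$, and setting $\lambda := r/\alpha$, I would first absorb the factors $z^{-\alpha}$ and $e^{rz} = (e^{\lambda z})^\alpha$ inside the $\alpha$-th power to obtain
\[
G_r(z) = (H(z))^\alpha, \qquad H(z) := \frac{P(e^{-z}) e^{\lambda z}}{z}.
\]

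Next, I would argue that, on the principal branch (with $H(0) = 1$), the condition $G_r(z) = 1 + O(z^p)$ is equivalent to $H(z) = 1 + O(z^p)$, which is the same as
\[
P(e^{-z}) e^{\lambda z} = z + O(z^{p+1}).
\]
Expanding the left-hand side exponential-by-exponential,
\[
P(e^{-z}) e^{\lambda z} = \sum_{j=0}^{p} \beta_j e^{(\lambda - j)z} = \sum_{n=0}^{\infty} \frac{z^n}{n!} \sum_{j=0}^{p} \beta_j (\lambda - j)^n,
\]
and matching coefficients with $z$ for $n = 0, 1, \ldots, p$ yields precisely the system (\ref{VanderMonde}).

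The main obstacle is justifying the reduction from $H^\alpha = 1 + O(z^p)$ to $H = 1 + O(z^p)$ when $\alpha$ is non-integer: this requires $H$ to be analytic near $0$ with $H(0) = 1$. Analyticity of $H$ requires $P(1) = 0$, so that the factor $1 - e^{-z}$ of $P(e^{-z})$ cancels the $z$ in the denominator, and $H(0) = 1$ then requires $-P'(1) = 1$. These are exactly the $n = 0$ and $n = 1$ equations of (\ref{VanderMonde}), so the argument is self-consistent: the consistency and normalization conditions emerge as the first two cases of the Vandermonde system, and the remaining equations ($n = 2, \ldots, p$) then enforce the vanishing of the higher Taylor coefficients of $P(e^{-z}) e^{\lambda z} - z$ through order $z^p$. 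Both directions of the equivalence follow at once from this coefficient-matching.
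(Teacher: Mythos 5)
Your proposal is correct and takes essentially the same route as the paper: both absorb $e^{rz}=(e^{\lambda z})^{\alpha}$ into the $\alpha$-th power with $\lambda=r/\alpha$, expand $\sum_{j=0}^{p}\beta_j e^{(\lambda-j)z}$ in powers of $z$, and reduce the condition $(\,\cdot\,)^{\alpha}=1+O(z^p)$ to the base series being $1+O(z^p)$, whence coefficient matching yields the Vandermonde system. The paper carries out your peeling step $H^{\alpha}=1+O(z^p)\iff H=1+O(z^p)$ by successively comparing coefficients in the binomial expansion of $(1+X)^{\alpha}$, with analyticity of $G_r$ forcing $b_0=0$ and the constant term forcing $b_1=1$ (exactly your $n=0$ and $n=1$ equations), so your explicit treatment of the analyticity of $H$ and the normalization $H(0)=1$ is the same argument in a slightly tidier package.
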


\begin{proof}
  In view of Theorem \ref{Mainthm}, $W_{p,r}(z)$ approximates the fractional derivatives with shift $r$ and order $p$ if and only if
 , with $\lambda_j = \lambda - j$,
\begin{align*}
  G_r(z) & = \frac{1}{z^\alpha} W_p(e^{-z})e^{rz} =
  \frac{1}{z^\alpha} \left(\sum_{j=0}^p \beta_j e^{-jz}\right)^\alpha e^{rz} =
   \frac{1}{z^\alpha} \left(\sum_{j=0}^p \beta_j e^{(r/\alpha -j)z}\right)^\alpha \\
   & = \frac{1}{z^\alpha} \left(\sum_{j=0}^p \beta_j e^{\lambda_j z}\right)^\alpha =
   \left( \frac{1}{z} \sum_{j=0}^p \beta_j \sum_{n=0}^\infty \frac{1}{n!} \lambda_j^n z^n
      \right)^\alpha  = \left(\sum_{n=0}^\infty b_n z^{n-1}  \right)^\alpha \\
      & = \left( \frac{b_0}{z} + b_1 + \sum_{n=2}^\infty b_n z^{n-1}   \right)^\alpha = 1 + O(z^p),
\end{align*}
where
\[ b_n = \frac{1}{n!} \sum_{j=0}^p \lambda_j^n \beta_j z^{n-1},
\qquad n = 0,1,2,\cdots . \]

Since $G_r(z)$ is analytic, it does not have any poles and hence we have $b_0 = 0$. Next, since $G_r(z) = 1 + O(z^p) $ has the constant term 1, we must have $ b_1 = 1$. \\
For $p=1$, these two conditions give the system
(\ref{VanderMonde}) and the proof ends.\\
For $p>1$, $G_r(z)$ reduces to
\[
G_r(z) = \left(
1+ \sum_{n=2}^\infty b_n z^{n-1} \right)^\alpha =: (1+X)^\alpha,
\]
where $X =  \sum_{n=2}^\infty b_n z^{n-1} $.
Taylor expansion of $G_r(z) $ with respect to $X$ gives
\[
G_r(z) = 1+\alpha X + \frac{\alpha(\alpha-1)}{2!} X^2
+ \frac{\alpha(\alpha-1)(\alpha-2)}{3!} X^3+\cdots = 1 +O(z^p).
\]

The term $z$ appears in the term $\alpha X$ only on the left hand side.
This gives $b_2 = 0$. The same is true for all $b_n, n = 2,3,\cdots, p$
by successively comparing the coefficients of $z^{n-1}, n = 2, 3, \cdots, p$.
Altogether, we have $b_n = \delta_{1,n} , n = 0,1,2,\cdots, p$ which yield the system (\ref{VanderMonde}).
\end{proof}

The system (\ref{VanderMonde}) can be expressed in Vandermonde matrix form as
\begin{equation}\label{VanderMatrix}
  V(\lambda_0, \lambda_1,\cdots, \lambda_p) {\bf b } = {\bf d},
\end{equation}
where ${\bf b,d} $ are column vectors given by
$ {\bf b} = [\beta_0,\beta_1,\cdots, \beta_p]^T , {\bf d} = [ 0,1,0,\cdots,0]^T$ respectively with superscript $T$ denoting transpose.
Solving the Vandermonde system (\ref{VanderMatrix}), we have the following main result.

\begin{thm}\label{BetaSolution}
  The generating function $W_{p,r}(z) = (\beta_0 + \beta_1 z +\cdots + \beta_p z^p)^\alpha $ approximates the fractional derivatives with shift $r$ and order $p$ if and only if the coefficients $\beta_j$  are given by
\begin{align}\label{ExplicitBeta1}
\beta_j & = - \left(
\prod_{\begin{smallmatrix} m=0\\ m\ne j \end{smallmatrix}   }^{p} \frac{1}{j-m}
\right) \left(
\sum_{\begin{smallmatrix} m=0\\ m\ne j \end{smallmatrix}}^p \prod_{\begin{smallmatrix} l=0\\ l\ne m,j \end{smallmatrix}}^{p} (\lambda-l)
\right) \\
& = -\left( \prod_{\begin{smallmatrix} m=0\\ m\ne j \end{smallmatrix}}^p \frac{\lambda - m}{j-m}  \right) \left(
    \sum_{\begin{smallmatrix} m=0\\ m\ne j \end{smallmatrix}}^p \frac{1}{\lambda - m}
    \right), \qquad j = 0,1,2,\cdots, p. \label{ExplicitBeta2}
  \end{align}

\end{thm}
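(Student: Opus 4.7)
The preceding theorem has already reduced the claim to solving the Vandermonde system $V(\lambda_0,\ldots,\lambda_p)\mathbf{b} = \mathbf{d}$, where $\mathbf{d}$ has its only nonzero entry equal to $1$ in position $n=1$ and $\lambda_j = \lambda - j$. The plan is to apply Cramer's rule and to evaluate the arising determinants with the two lemmas recorded just before the theorem.

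Since the $\lambda_j$ are pairwise distinct, $\det V \ne 0$ and Cramer's rule gives $\beta_j = \det(V_j)/\det V$, where $V_j$ replaces the $j$th column of $V$ by $\mathbf{d}$. Expanding $\det(V_j)$ along the replaced column, only the $1$ in row $1$ contributes, so $\det(V_j) = (-1)^{1+j} M_{1,j}$, where $M_{1,j}$ is the minor obtained by deleting row $1$ and column $j$ from $V$. Crucially, $M_{1,j}$ is precisely the $U_2$ matrix of the second lemma built on the $p$ parameters $\{\lambda_l : l \ne j\}$: its rows read $1,\,\lambda_l^2,\,\lambda_l^3,\,\ldots,\,\lambda_l^p$. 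Applying that lemma therefore yields
\begin{equation*}
M_{1,j} = \Bigl(\prod_{\substack{0\le i<k\le p\\ i,k\ne j}}(\lambda_k-\lambda_i)\Bigr) \Bigl(\sum_{\substack{m=0\\ m\ne j}}^p\prod_{\substack{l=0\\ l\ne m,j}}^p (\lambda-l)\Bigr).
\end{equation*}

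To finish I would cancel the Vandermonde product in $M_{1,j}$ against the corresponding subproduct of $\det V = \prod_{0\le i < k\le p}(\lambda_k - \lambda_i)$. The surviving factors are those with $j\in\{i,k\}$, namely $\prod_{m<j}(\lambda_j - \lambda_m)\cdot\prod_{m>j}(\lambda_m - \lambda_j)$. Using $\lambda_l = \lambda - l$, this equals $(-1)^j\prod_{m\ne j}(j-m)$, where the $(-1)^j$ arises from the $j$ factors with $m<j$. Combined with the cofactor sign $(-1)^{1+j}$, the overall sign collapses to $-1$, producing exactly the first displayed form (\ref{ExplicitBeta1}). The equivalent form (\ref{ExplicitBeta2}) then follows by writing $\prod_{l\ne m,j}(\lambda - l) = \prod_{l\ne j}(\lambda - l)/(\lambda - m)$ and pulling the common $\prod_{l\ne j}(\lambda-l)$ out of the summation, which combines with $\prod_{m\ne j}(j-m)^{-1}$ into $\prod_{m\ne j}(\lambda-m)/(j-m)$.

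The only delicate point anywhere is sign-bookkeeping in the cancellation step; the second lemma has already done the substantive algebraic work of evaluating the nonstandard determinant, so no further obstacle is anticipated.
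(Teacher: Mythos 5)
Your proposal is correct and follows essentially the same route as the paper's own proof: Cramer's rule, cofactor expansion of $|V_j(\mathbf{d})|$ along the replaced column to expose the minor of $U_2$ type on the parameters $\{\lambda_l : l\ne j\}$, evaluation via the second lemma, and cancellation of the shared Vandermonde factors using $\lambda_m-\lambda_j = j-m$ so that the signs $(-1)^{1+j}$ and $(-1)^j$ collapse to the overall $-1$. The only cosmetic difference is that the paper handles $p=1$ as a separate base case while your uniform argument absorbs it via empty-product conventions, which the paper itself notes is compatible.
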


\begin{proof}
  Employing Cramer's rule to solve the system (\ref{VanderMatrix}), we have
\begin{equation}\label{Cramersol}
  \beta_j = \frac{|V_j({\bf d})| }{|V(\lambda_0, \lambda_1,\cdots, \lambda_p) |},
\end{equation}
where $V_j({\bf d})$ is the matrix obtained from $V(\lambda_0, \lambda_1,\cdots, \lambda_p)$ by replacing its $j^{\rm th}$ column with
  vector ${\bf b}$.\\
When $p = 1$, we have $|V_0({\bf d} )| =
\left| \begin{array}{cc}
         0 & 1 \\
         1 & \lambda_1
       \end{array}\right| = -1$
 and $|V_1({\bf d} )| =
\left| \begin{array}{cc}
         1 & 0 \\
         \lambda_0 & 1
       \end{array}\right| = 1$,
while $|V(\lambda_0,\lambda_1)| =
\lambda_1-\lambda_0 = 0-1=-1$. Hence, $\beta_0 = 1, \beta_1 = -1$.

When $p >1$, $|V_j(\bf d)|$ is given by, with $\bf d$ in the $j$-th column and by using the standard determinant rule selecting the $j-$th column as pivot,
\begin{align*}
|V_j({\bf d})|  & = \left|
\begin{array}{cccccc}
1 & 1 & \cdots & 0 & \cdots & 1 \\
\lambda_0 & \lambda_1 & \cdots & 1 & \cdots & \lambda_p \\
\lambda_0^2 & \lambda_1^2 & \cdots & 0 & \cdots & \lambda_p^2 \\
\vdots  & \vdots & \vdots & \vdots & \vdots & \vdots \\
\lambda_0^p & \lambda_1^p & \cdots & 0 & \cdots & \lambda_p^p
\end{array}
\right| \\
 & = (-1)^{j+1}
\left|
\begin{array}{ccccccc}
1 & 1 & \cdots & 1 &  1 & \cdots & 1 \\
\lambda_0^2 & \lambda_1^2 & \cdots & \lambda_{j-1}^2 &\lambda_{j+1}^2 &  \cdots & \lambda_p^2 \\
\lambda_0^3 & \lambda_1^3 & \cdots & \lambda_{j-1}^3 &\lambda_{j+1}^3 &  \cdots & \lambda_p^3 \\
\vdots  & \vdots & \vdots & \vdots & \vdots & \vdots  & \vdots \\
\lambda_0^p & \lambda_1^p & \cdots & \lambda_{j-1}^p &\lambda_{j+1}^p &  \cdots & \lambda_p^p
\end{array}
\right| .
\end{align*}

Note that the last determinant is of the form $U_2$ of size $p\times p$ missing the column with the parameter $\lambda_j$. Hence, we have from (\ref{VVendermode}),
\[
|V_j({\bf d)}|   = (-1)^{j+1} \left(\prod_{\begin{smallmatrix} 0\le m<n\le p\\ m,n\ne j \end{smallmatrix}} (\lambda_n-\lambda_m) \right)
\left(
\sum_{\begin{smallmatrix} m=0\\ m\ne j \end{smallmatrix}}^{p} \prod_{\begin{smallmatrix} l=0\\ l\ne m,j \end{smallmatrix}}^{p} \lambda_l
\right).
\]
The determinant of the Vandermonde matrix $V(\lambda_0, \lambda_1,\cdots, \lambda_p))$ can be written as
\[
|V|
= \prod_{ 0\le m<n\le p }
 (\lambda_n-\lambda_m) = \prod_{\begin{smallmatrix} 0\le m<n\le p\\ m,n\ne j \end{smallmatrix}} (\lambda_n-\lambda_m)
 (-1)^j \prod_{\begin{smallmatrix} m=0\\ m\ne j \end{smallmatrix} }^p (\lambda_m-\lambda_j),
\]
where we have partitioned the product to extract the product part in $|V_j({\bf d})|$.

We thus have from (\ref{Cramersol}) and with the observation that $\lambda_m - \lambda_j = j - m$,
\[
\beta_j = - \left(
\prod_{\begin{smallmatrix} m=0\\ m\ne j \end{smallmatrix}   }^{p} \frac{1}{j-m}
\right) \left(
\sum_{\begin{smallmatrix} m=0\\ m\ne j \end{smallmatrix}}^p \prod_{\begin{smallmatrix} l=0\\ l\ne m,j \end{smallmatrix}}^{p} \lambda_l
\right)=
-\left( \prod_{\begin{smallmatrix} m=0\\ m\ne j \end{smallmatrix}}^p \frac{\lambda - m}{j-m}  \right) \left(
    \sum_{ \begin{smallmatrix} m=0\\ m\ne j \end{smallmatrix}}^p \frac{1}{\lambda - m}
    \right).
\]
\end{proof}

Note that formulas (\ref{ExplicitBeta1}) and (\ref{ExplicitBeta2}) are compatible for the case $p=1$ as well although we treated it separately in the proof.

We derive some generating functions using the explicit forms (\ref{ExplicitBeta1}) and (\ref{ExplicitBeta2}).
\begin{enumerate}
\item When $p =1 $, we have from (\ref{ExplicitBeta2}) \\
$\beta_0 = -\frac{\lambda_1}{0-1} \frac{1}{\lambda_1} = 1 ,
\beta_1 = -\frac{\lambda_0}{1-0} \frac{1}{\lambda_0} = -1$ and generating function is given by $W_{1,r}(z) = (1-z)^\alpha$.

\item When $p =2 $, we have \\
 $\beta_0 = -\frac{\lambda_1}{0-1}\frac{\lambda_2}{0-2} \left(\frac{1}{\lambda_1} + \frac{1}{\lambda_2} \right)= -\frac{1}{2}(\lambda_2+\lambda_1) = -\frac{1}{2} (\lambda-2+\lambda-1) = \frac{3}{2} -\lambda $  ,\\
 $\beta_1 = -\frac{\lambda_0}{1-0}\frac{\lambda_2}{1-2} \left(\frac{1}{\lambda_0} + \frac{1}{\lambda_2} \right) = -1(\lambda_2+\lambda_0) = -\frac{1}{2}(\lambda-2+\lambda-0) = -2 + 2\lambda $  ,\\
 $\beta_2 = -\frac{\lambda_0}{2-0}\frac{\lambda_1}{2-1} \left(\frac{1}{\lambda_0} + \frac{1}{\lambda_1} \right)= -\frac{1}{2}(\lambda_1+\lambda_0) = -\frac{1}{2}(\lambda-1+\lambda-0) = \frac{1}{2} -\lambda $  \\
 and hence
 $W_{2,r}(z) = \left( \left(  \frac{3}{2} - \lambda \right) + (-2 + 2\lambda) z + \left(\frac{1}{2} -\lambda \right) z^2 \right)^\alpha $ as in \cite{nasir2018new}.
\item When $p = 3$, using the other form for $\beta_j$, we have \\
 $\beta_0 = -\frac{1}{0-1}\frac{1}{0-2}\frac{1}{0-3} (\lambda_2\lambda_3  + \lambda_1\lambda_3  + \lambda_1\lambda_2 ) = \frac{1}{2}\lambda^2 -2\lambda +\frac{11}{6}$.\\
 $\beta_1 = -\frac{1}{1-0}\frac{1}{1-2}\frac{1}{1-3} (\lambda_2\lambda_3  + \lambda_0\lambda_3  + \lambda_0\lambda_2 ) = -\frac{3}{2}\lambda^2 + 5\lambda - 3$.\\
 $\beta_2 = -\frac{1}{2-0}\frac{1}{2-1}\frac{1}{2-3} (\lambda_1\lambda_3  + \lambda_0\lambda_3  + \lambda_0\lambda_1 ) = -\frac{3}{2}\lambda^2 - 4\lambda + \frac{3}{2}$.\\
 $\beta_3 = -\frac{1}{3-0}\frac{1}{3-1}\frac{1}{3-2} (\lambda_1\lambda_2  + \lambda_0\lambda_2  + \lambda_1\lambda_2 ) = -\frac{1}{2}\lambda^2 + \lambda - \frac{1}{3}$.
\end{enumerate}

We list the coefficients for approximating generating functions $W_{p,r}(z)$ with order $p$, shift $r$ obtained for $1\le p\le 6$
 in Table \ref{Wtable}.

\begin{table}[th]
\centering
\begin{tabular}{ll}
\hline\hline
$p$ & $\beta_k, 0\le k \le p$\\
\hline\hline
1 & $ \beta_{0} = 1 $ \\
& $ \beta_{1} = -1 $ \\
\hline
2 & $ \beta_{0} = - \lambda + \frac{3}{2} $ \\
& $ \beta_{1} = - 2 +2 \lambda $ \\
& $ \beta_{2} = - \lambda + \frac{1}{2} $ \\
\hline
3 & $ \beta_{0} = \frac{\lambda^{2}}{2} - 2 \lambda + \frac{11}{6} $ \\
& $ \beta_{1} = - \frac{3 \lambda^{2}}{2} + 5 \lambda - 3 $ \\
& $ \beta_{2} = \frac{3 \lambda^{2}}{2} - 4 \lambda + \frac{3}{2} $ \\
& $ \beta_{3} = - \frac{\lambda^{2}}{2} + \lambda - \frac{1}{3} $ \\
\hline
4 & $ \beta_{0} = - \frac{\lambda^{3}}{6} + \frac{5 \lambda^{2}}{4} - \frac{35 \lambda}{12} + \frac{25}{12} $ \\
& $ \beta_{1} = \frac{2 \lambda^{3}}{3} - \frac{9 \lambda^{2}}{2} + \frac{26 \lambda}{3} - 4 $ \\
& $ \beta_{2} = - \lambda^{3} + 6 \lambda^{2} - \frac{19 \lambda}{2} + 3 $ \\
& $ \beta_{3} = \frac{2 \lambda^{3}}{3} - \frac{7 \lambda^{2}}{2} + \frac{14 \lambda}{3} - \frac{4}{3} $ \\
& $ \beta_{4} = - \frac{\lambda^{3}}{6} + \frac{3 \lambda^{2}}{4} - \frac{11 \lambda}{12} + \frac{1}{4} $ \\
\hline
5 & $ \beta_{0} = \frac{\lambda^{4}}{24} - \frac{\lambda^{3}}{2} + \frac{17 \lambda^{2}}{8} - \frac{15 \lambda}{4} + \frac{137}{60} $ \\
& $ \beta_{1} = - \frac{5 \lambda^{4}}{24} + \frac{7 \lambda^{3}}{3} - \frac{71 \lambda^{2}}{8} + \frac{77 \lambda}{6} - 5 $ \\
& $ \beta_{2} = \frac{5 \lambda^{4}}{12} - \frac{13 \lambda^{3}}{3} + \frac{59 \lambda^{2}}{4} - \frac{107 \lambda}{6} + 5 $ \\
& $ \beta_{3} = - \frac{5 \lambda^{4}}{12} + 4 \lambda^{3} - \frac{49 \lambda^{2}}{4} + 13 \lambda - \frac{10}{3} $ \\
& $ \beta_{4} = \frac{5 \lambda^{4}}{24} - \frac{11 \lambda^{3}}{6} + \frac{41 \lambda^{2}}{8} - \frac{61 \lambda}{12} + \frac{5}{4} $ \\
& $ \beta_{5} = - \frac{\lambda^{4}}{24} + \frac{\lambda^{3}}{3} - \frac{7 \lambda^{2}}{8} + \frac{5 \lambda}{6} - \frac{1}{5} $ \\
\hline
6 & $ \beta_{0} = - \frac{\lambda^{5}}{120} + \frac{7 \lambda^{4}}{48} - \frac{35 \lambda^{3}}{36} + \frac{49 \lambda^{2}}{16} - \frac{203 \lambda}{45} + \frac{49}{20} $ \\
& $ \beta_{1} = \frac{\lambda^{5}}{20} - \frac{5 \lambda^{4}}{6} + \frac{31 \lambda^{3}}{6} - \frac{29 \lambda^{2}}{2} + \frac{87 \lambda}{5} - 6 $ \\
& $ \beta_{2} = - \frac{\lambda^{5}}{8} + \frac{95 \lambda^{4}}{48} - \frac{137 \lambda^{3}}{12} + \frac{461 \lambda^{2}}{16} - \frac{117 \lambda}{4} + \frac{15}{2} $ \\
& $ \beta_{3} = \frac{\lambda^{5}}{6} - \frac{5 \lambda^{4}}{2} + \frac{121 \lambda^{3}}{9} - 31 \lambda^{2} + \frac{254 \lambda}{9} - \frac{20}{3} $ \\
& $ \beta_{4} = - \frac{\lambda^{5}}{8} + \frac{85 \lambda^{4}}{48} - \frac{107 \lambda^{3}}{12} + \frac{307 \lambda^{2}}{16} - \frac{33 \lambda}{2} + \frac{15}{4} $ \\
& $ \beta_{5} = \frac{\lambda^{5}}{20} - \frac{2 \lambda^{4}}{3} + \frac{19 \lambda^{3}}{6} - \frac{13 \lambda^{2}}{2} + \frac{27 \lambda}{5} - \frac{6}{5} $ \\
& $ \beta_{6} = - \frac{\lambda^{5}}{120} + \frac{5 \lambda^{4}}{48} - \frac{17 \lambda^{3}}{36} + \frac{15 \lambda^{2}}{16} - \frac{137 \lambda}{180} + \frac{1}{6} $ \\
\hline
\end{tabular}
\caption{Coefficients $\beta_k$ of
$W_{p,r}(z) = (\beta_0 + \beta_1 z + \cdots + \beta_p z^p  )^\alpha $} for order $p$ approximation, $1\le p\le 6$.\label{Wtable}
\end{table}

Note that when there is no shift, i.e., $r = 0$, we have $\lambda = \frac{r}{\alpha} = 0$ and  these generating functions reduce to those in Table \ref{LubichHighOrder} obtained by Lubich \cite{lubich1986discretized}.

When constructing these generating functions, we were not concerned with the condition
of absolute convergence of the series for the generating function. Therefore, these results should be validated
using this condition. This validation will give range of values for $\alpha$ and for the shift $r$.

For the generating functions $W_{p,r}(z) $, we have the following:

\begin{thm}
The generating functions $W_{p,r}(z) $ approximate the left and right
fractional differential operators $D_{x\mp}^{\alpha} $  with shift $r$ and order $p$.
\end{thm}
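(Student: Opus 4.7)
The plan is to invoke Theorem \ref{Mainthm} with generating function $W_{p,r}(z) = P(z)^\alpha$, where $P(z) = \sum_{j=0}^p \beta_j z^j$ with coefficients given by Theorem \ref{BetaSolution}, and to use that theorem to supply the expansion $G_r(z) = 1 + O(z^p)$. Theorem \ref{BetaSolution} already provides the algebraic content of the claim: the explicit $\beta_j$ satisfy the Vandermonde system (\ref{VanderMonde}), which is exactly equivalent to $G_r(z) := z^{-\alpha}W_{p,r}(e^{-z})e^{rz} = 1 + O(z^p)$ as a formal power series. The one piece of Theorem \ref{Mainthm}'s hypothesis still to check is the analyticity of $G_r(z)$ on a closed disk $|z|\le R$ with $R\ge 1$, so the substance of the proof is that verification.

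First I would extract consistency from the Vandermonde system: the $n=0$ row of (\ref{VanderMonde}) reads $\sum_{j=0}^p \beta_j = 0$, so $P(1)=0$, and one may factor $P(z) = (1-z)\widetilde Q(z)$ with $\deg \widetilde Q = p-1$. Substituting $z\mapsto e^{-z}$ and dividing by $z^\alpha$ gives
\[
G_r(z) = \left(\frac{1-e^{-z}}{z}\right)^{\!\alpha} \widetilde Q(e^{-z})^\alpha\, e^{rz}.
\]
Since $(1-e^{-z})/z = 1 - z/2 + O(z^2)$ is analytic and equal to $1$ at the origin, the first factor is analytic in a neighbourhood of $z=0$ via the principal branch of the $\alpha$-th power, and $e^{rz}$ is entire. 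Thus $G_r$ is analytic on $|z|\le R$ provided $\widetilde Q(e^{-z})$ is non-vanishing there, which is equivalent to the roots of $\widetilde Q$ lying outside the image $\{e^{-z} : |z|\le R\}$ of the disk.

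The main obstacle is therefore the root location of $\widetilde Q$, which is exactly the ``validation'' flagged in the paragraph preceding the theorem. In the unshifted case $\lambda = r/\alpha = 0$ the polynomial $P$ collapses to Lubich's polynomial in Table \ref{LubichHighOrder}, whose non-unit roots are classically known to lie strictly outside the closed unit disk for $1\le p\le 6$ (this is the A($\vartheta$)-stability of the BDF methods from which Lubich derived them). A perturbation argument in $\lambda$, using continuity of the roots of $\widetilde Q$ in its coefficients, then yields an open range of $\lambda$ -- hence of $(\alpha,r)$ -- on which the roots remain outside a disk of radius slightly larger than $1$, so $R\ge 1$ can be fixed uniformly. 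This is where any restrictions on the admissible shifts $r$ arise.

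With analyticity secured, Theorem \ref{Mainthm} applies: because Theorem \ref{BetaSolution} guarantees $G_r(z) = 1 + O(z^p)$, the theorem yields
\[
D_{x\mp}^\alpha f(x) = \Delta_{x\mp,h,r}^\alpha f(x) + O(h^p),
\]
which by Definition \ref{def2} is precisely the assertion that $W_{p,r}(z)$ approximates $D_{x\mp}^\alpha$ with shift $r$ and order $p$, completing the proof.
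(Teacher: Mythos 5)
Your overall architecture matches the paper's: the paper proves this theorem by nothing more than the observation that $G_{p,r}(z)=\frac{1}{z^\alpha}e^{rz}W_{p,r}(e^{-z})=1+O(z^p)$, i.e., the order condition of Theorem \ref{Mainthm}, which Theorem \ref{BetaSolution} (equivalently the system (\ref{VanderMonde})) already guarantees — exactly your first step. Where you go beyond the printed proof is in trying to verify the remaining hypothesis of Theorem \ref{Mainthm}, analyticity of $G_r$ on $|z|\le R$ with $R\ge 1$; the paper does not address this at all, and indeed concedes in the paragraph preceding the theorem that this validation is outstanding and ``will give range of values for $\alpha$ and for the shift $r$.'' So your instinct to supply that verification is sound, and your factorization $P(z)=(1-z)\widetilde Q(z)$ with $\bigl(\frac{1-e^{-z}}{z}\bigr)^\alpha$ analytic and nonzero at the origin is the right reduction.

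However, your verification contains a genuine gap. You correctly identify the needed condition — the roots of $\widetilde Q$ must avoid the image $\{e^{-z}:|z|\le R\}$ — but then you only certify the strictly weaker property that the roots lie outside the closed unit disk (plus a perturbation in $\lambda$ keeping them ``outside a disk of radius slightly larger than $1$''). These are not the same: for $R\le\pi$ the image of $|z|\le R$ under $e^{-z}$ is $\{w:(\ln|w|)^2+(\operatorname{Arg} w)^2\le R^2\}$, which reaches modulus $e^{R}\ge e$ along the positive real axis; conversely, a hypothetical root $w_0=1+\varepsilon$ lies outside the unit disk yet has $z_0=-\operatorname{Log} w_0\approx-\varepsilon$ of modulus well below $1$, which would place a branch singularity of $G_r$ inside $|z|\le 1$ and void the hypothesis of Theorem \ref{Mainthm}. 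So ``roots outside radius $1+\delta$'' does not deliver $R\ge 1$. The correct criterion is $|\operatorname{Log} w_0|>1$ for every root $w_0$ of $\widetilde Q$, which must be checked quantitatively: for $p=2$ the Lubich polynomial of Table \ref{LubichHighOrder} has the single root $w_0=3$ with $|\operatorname{Log}3|\approx1.099$, and for $p=3$ the roots $\approx 1.75\pm1.56i$ give $|\operatorname{Log} w_0|\approx1.12$ — uncomfortably close to $1$, which is precisely why the unit-disk proxy is unsafe — and your continuity-in-$\lambda$ argument should then be run against this criterion rather than against root modulus. Two smaller points: the classical fact you want about BDF is zero-stability (the root condition on $\rho$), not A($\vartheta$)-stability; and even once repaired, your argument proves the theorem only for $\lambda=r/\alpha$ in a restricted range, whereas the statement as printed is unconditional — a discrepancy the paper itself tacitly admits but its one-line proof glosses over.
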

\begin{proof}
  Taylor series expansion of $G_{p,r}(z)$ gives  $ G_{p,r}(z) = \frac{1}{z^\alpha}e^{\pm rz}
  W_{p,r}(e^{-z}) = 1+O(z^p)$ for $ 0 \le p \le 6$.
\end{proof}

The Gr\"unwald weights $w_{k,r}^{(\alpha)} $ can be computed by a recurrence formula
\cite{rall1981automatic, weilbeer2005efficient}.

\begin{lem}
Let $P(z) = \sum_{j=0}^{\infty} \beta_j z^j $, where $\beta_j \in \mathbb{R}$ , $\alpha >0$
and $W(z) = ( P(z) )^\alpha = \sum_{m=0}^{\infty} w_m z^m $. Then,
the coefficients $w_m$ satisfy the recurrence form
\begin{equation}\label{Miller}
w_{0} = \beta_0^\alpha , \;
w_{m} = \frac{1}{m \beta_0}
\sum_{j=1}^{m} (j(\alpha+1)-m) w_{m-j} \beta_j.
\end{equation}
\end{lem}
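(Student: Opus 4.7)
The plan is to derive the recurrence from the logarithmic-derivative identity satisfied by $W(z) = P(z)^\alpha$, a standard trick often attributed to J.C.P. Miller. Differentiating both sides of $W(z) = P(z)^\alpha$ gives $W'(z) = \alpha P(z)^{\alpha-1} P'(z)$, and multiplying through by $P(z)$ eliminates the fractional power to yield the polynomial/power-series identity
\begin{equation*}
P(z)\, W'(z) \;=\; \alpha\, W(z)\, P'(z).
\end{equation*}
This identity is the engine of the proof: it converts a nonlinear relation ($W=P^\alpha$) into a linear one between the Taylor coefficients of $W$ and $P$.

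Next I would expand both sides as Cauchy products and read off the coefficient of $z^{m-1}$. Writing $P(z)=\sum_j \beta_j z^j$ and $W(z)=\sum_n w_n z^n$, the left side contributes $\sum_{j=0}^{m}(m-j)\,\beta_j\, w_{m-j}$, while the right side contributes $\alpha\sum_{j=0}^{m} j\,\beta_j\, w_{m-j}$. Equating these and isolating the $j=0$ term (where $\beta_0$ multiplies $m w_m$ on the left and $0$ on the right) gives
\begin{equation*}
m\,\beta_0\, w_m \;=\; \sum_{j=1}^{m}\bigl(j(\alpha+1)-m\bigr)\,\beta_j\, w_{m-j},
\end{equation*}
which rearranges to the stated recurrence for $w_m$ with $m \ge 1$. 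The base case is immediate: setting $z=0$ in $W(z)=P(z)^\alpha$ gives $w_0 = \beta_0^\alpha$ (this is also where we tacitly assume $\beta_0 \neq 0$, which is needed to divide by $\beta_0$ in the recurrence and is consistent with the generating functions in Table \ref{Wtable}).

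There is no real obstacle here; the only subtle point is justifying termwise differentiation and multiplication of the series, which is standard for formal power series (or for convergent series within the disk of convergence of $P$ and $W$, the latter guaranteed by $\beta_0 \neq 0$ and analyticity of $u\mapsto u^\alpha$ near $u=\beta_0$). The argument is identical whether one works in the formal-power-series ring $\mathbb{R}[[z]]$ or analytically, so I would state it in whichever framework is cleanest — most likely formally, since only coefficient extraction is used.
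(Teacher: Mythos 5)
Your proposal is correct: the Miller identity $P(z)W'(z)=\alpha W(z)P'(z)$, coefficient extraction at $z^{m-1}$, and isolation of the $j=0$ term give exactly $m\beta_0 w_m=\sum_{j=1}^{m}\bigl(j(\alpha+1)-m\bigr)\beta_j w_{m-j}$, with $w_0=\beta_0^\alpha$ from $z=0$. The paper itself supplies no proof of this lemma --- it only cites \cite{rall1981automatic, weilbeer2005efficient} --- and your argument is precisely the classical J.C.P.\ Miller derivation found in those references, so you have in effect filled in the omitted details, including the tacit hypothesis $\beta_0\neq 0$ (with $\beta_0>0$ or a fixed branch to make $\beta_0^\alpha$ well defined), which the lemma's statement genuinely requires.
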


For the generating function $W_{p,r}(z)$, being the power of a polynomial of degree $p$,
the upper limit of the summation in (\ref{Miller}) goes up to
$\min(m,p) $ only.

It seems that, despite
 higher order of approximation of $W_{p,r}(z)$, the stability of solutions
using these approximations to FDEs remains an issue. Our numerical tests
on some steady state problems show that the second order approximation
$W_{2,1}(z) $ displays stability for values of $\alpha $ in the interval $[1,2]$.
However, for the other  higher order approximations,
the stability is limited to a subset $[\alpha_0, 2], 1<\alpha_0, $   of the
interval $[1,2]$. This phenomenon warrants a thorough theoretical investigation
for approximations of orders three and above.

\section{General finite difference formula for derivatives}\label{finitedif}

The generating function (\ref{ExplicitBeta1}) also gives
a unified
finite difference formula for integer order derivatives.

Infact, when $\alpha = n $, an integer, the generating function $W_{p,r}(z)$ is an integer power of a polynomial and hence has a finite series expansion which  obviously satisfies the condition of absolute convergence.

This means that we have  finite difference formulas for any given order
with any given shift for any integer order derivatives.
The coefficients for the finite difference formulas are obtained by expanding the integer power of the polynomial
obtained from Theorem \ref{BetaSolution}.

We list here some of well known and not well-known finite difference formulas for demonstration. Let $f$ be a function defined
on an interval which has enough derivatives to have
finite difference forms. For notational brevity, we denote
$f(x + k h) = f_k $.\\
  1.
  For the first derivative $df(x)/dx$,  approximation of order 2 with shift 1 is obtained from (\ref{ExplicitBeta1}) with parameters, $\alpha = 1, p = 2$ and $ r = 1$. The generating function is then given by, after expanding the power,
  $
W(z) =  - \frac{z^{2}}{2} + \frac{1}{2}
$
    and we have
$\frac{d^{  }}{dx^{  }}f(x) =
\frac{1}{h}
\left(
\frac{1}{2} f_ { -1 }
- \frac{1}{2} f_ { 1 }
\right) +O(h^2).$
This is the well known central difference formula
of order 2 for the first derivative.\\

For order 3 with shift 2, we have
$
W(z) =  - \frac{z^{3}}{3} - \frac{z^{2}}{2} + z - \frac{1}{6}
$
    and the finite difference formula at $x$,
    with $\alpha = 1, p = 3 $ and $  r = 2$, is
$
\frac{d^{  }}{dx^{  }}f(x) =
\frac{1}{h}  \left(- \frac{1}{6} f_{ -2 }  +
 f_ {-1 }
- \frac{1}{2} f_{ 0 }
- \frac{1}{3} f_{ 1 }\right) + O(h^3).
$ \\

For order 3 with shift 3/2, we have
$
W(z) =  \frac{z^{3}}{24} - \frac{9 z^{2}}{8} + \frac{9 z}{8} - \frac{1}{24}
$
    and the finite difference formula at $f(x) = f_0$,
    with $ \alpha = 1, p =  3 $ and $ r =  3/2 $,  is given by
$
\frac{d^{  }}{dx^{  }}f(x) =
- \frac{1}{24} f_ { -3/2 }  +
\frac{9}{8} f_ { -1/2 }
- \frac{9}{8} f_ { 1/2 }  +
\frac{1}{24} f_ { 3/2 } +O(h^3)
$
which is a central difference formula with functional values at midpoints of the discretized points.

2.
For the second derivative $d^2f(x)/dx^2$,
approximating generating function of order 4 with shift 2 is given by
$
W_{4,2}(z) =  \left(- \frac{z^{4}}{12} + \frac{z^{3}}{2} - \frac{3 z^{2}}{2} + \frac{5 z}{6} + \frac{1}{4}\right)^{2}
 =  \frac{z^{8}}{144} - \frac{z^{7}}{12} + \frac{z^{6}}{2} - \frac{59 z^{5}}{36} + \frac{73 z^{4}}{24} - \frac{9 z^{3}}{4} - \frac{z^{2}}{18} + \frac{5 z}{12} + \frac{1}{16}
$
    and the finite difference formula
    for $ d^2f(x)/dx^2$
    at $x$ is given by
$
\frac{d^{ 2 }}{dx^{ 2 }}f(x) =
\frac{1}{h^2}  \left(
\frac{f_ { -2 }}{16}   +
\frac{5f_ { -1 }}{12}
- \frac{f_ { 0 }}{18}
- \frac{9f_ { 1 }}{4}   +
\frac{73f_ { 2 }}{24}
- \frac{59f_ { 3 }}{36}   +
\frac{f_ { 4 }}{2}
- \frac{f_ { 5 }}{12}   +
\frac{f_ { 6 }}{144} \right) + O(h^4)
$.\\

To the best of our knowledge, this type of general explicit
form for finite difference formulas with arbitrary shifts and order for any derivative
is not available in the literature and hence  is claimed novelty.
However, it should be mentioned that these finite difference formulas
with given stencils of points and gien order of derivatives can be computed by solving their corresponding system of equations individually for each case \cite{CameronTaylor}. A recursive algorithm to compute
the coefficients are also given in \cite{fornberg1988generation} in which the coefficients for the approximation of a higher order derivative require the
coefficients of the previous order derivative and the stencil points are supplied.\\

In our explicit form, the coefficients are directly
computed for a derivative of given order $n$ and approximation order $p$. The number of stencil points is then at most $ np+1 $. The derivative point within the stencil is given through the shift $r$.
The generating function for left/right derivatives of integer orders with no shift gives the backward/forward difference formulas and
the left or right derivative with halfway shift, that is
$ r = np/2 $ which may not be an integer,  gives the central difference formulas for the desired orders. Any other shifts give new formulas of intended orders.

\section{Conclusion}\label{ConclusionSec}

Higher order approximations for fractional derivatives in the Gr\"unwald sense are considered. The Gr\"unwald type approximations are completely characterised by corresponding generating functions from which the weights  are obtained. Construction of approximating generating functions in the form of power of polynomials is considered and
a general explicit form for such generating functions are
derived with arbitrary shift and order of accuracy for fractional derivatives. Incidentally, these generating functions are
generalizations of the
known higher order approximations without shift.

The formula for approximating generating function
also gives finite difference formulas for any integer-order derivatives as well giving new formulas with
any order and shift.

These explicit generating functions might be a useful
tool for constructing finite difference approximations for fractional
and integer-order
derivatives and for their analysis and computations.


\end{document}